\documentclass[a4paper,12pt,twoside,leqno]{article}
\usepackage{amssymb} 
\usepackage{amsfonts}
\usepackage{amsmath}
\usepackage[utf8x]{inputenc}
\usepackage{tikz,relsize} 
\usepackage[cmtip,arrow]{xy}
\usepackage{pb-diagram,pb-xy}

\usepackage{tikz-cd}

\usepackage{hyperref}
\usepackage[noblocks]{authblk}

\usepackage{amssymb, amscd}

\usepackage[T1]{fontenc}

\usepackage{marginnote}
\usepackage{color}


\usepackage{amsthm}
\usepackage{mathtools}
\usepackage{tikz}
\usepackage{enumerate}
\usepackage{multirow}
\usepackage{longtable}

\newcommand{\al}{\alpha}
\newcommand{\bt}{\beta}
\newcommand{\te}{\theta}

\def\ns#1{\mathbb{#1}}
\def\N{\ns{N}}
\def\Z{\ns{Z}}

\def\R{\ns{R}}
\def\C{\ns{C}}
\def\F{\ns{F}}
\usepackage[mathscr]{eucal}

\DeclareMathOperator{\GL}{GL}

\begin{document}
\numberwithin{equation}{section}
\newtheorem{thm}{Theorem}[section]
\newtheorem{lm}[thm]{Lemma}
\newtheorem{cor}[thm]{Corollary}
\newtheorem{prop}[thm]{Proposition}
\newtheorem{dfn}[thm]{Definition}

 \newtheorem*{thmm}{Theorem}
\newtheorem*{deff}{Definition}
 \newtheorem*{corr}{Corollary}

\newtheorem{df}{Definition}[section]
\newtheorem{ex}{Example}[section]

\newtheorem{rem}{Remark}[section]
\author[]{R. Lutowski, N. Petrosyan, J. Popko, \& A. Szczepa\'{n}ski}

\title{\bf Spin structures of  flat manifolds of diagonal type}

\date{}

\maketitle

\begin{abstract}\noindent For each integer $d$ at least two, we construct  non-spin closed oriented flat manifolds with holonomy group $\Z_2^d$  and with the property that all of their finite proper covers have a spin structure. Moreover, all such covers have trivial Stiefel-Whitney classes. 
\end{abstract}
\section{Introduction} 
In this paper, we shall give a characterisation of  spin structures on closed flat manifolds with a diagonal
holonomy representation. In general, it is a difficult problem to classifying spin
structures on oriented flat manifolds.
If one is successful in defining a spin structure, it naturally leads to the definition of spinor fields, a Dirac operator and $\eta$-invariants on the manifolds (see e.g.~\cite{CMR, MP}).

Until now, the main direction of  research has been  on the
relation between the existence of a spin structure and properties of the
holonomy group and its representation. For example,  an oriented flat manifold has a spin structure if and only if its cover corresponding to a 2-Sylow subgroup of the holonomy has a spin structure. Hence,  from this point of view, more 
interesting flat manifolds are the ones with 2-group holonomy. From this class of manifolds, the simplest to describe are the flat manifolds with
holonomy group isomorphic to an elementary abelian 2-groups with representation of  diagonal
type. In fact, one of the first example of an oriented flat manifold
without  a spin structure is of this type (see \cite{LS}). For more information on this, we refer the  reader to \cite[\S 6.3]{S}.

Let us recall that every closed flat Riemannian manifold $M$ can be realised as a quotient of a Euclidean space by a discrete subgroup of the group of isometries $\Gamma \subseteq \mathrm{Iso}(\mathbb R^n)$ called a  {\it Bieberbach group}. More explicitly, considering the isomorphism $\mbox{Iso}(\mathbb R^n)\cong \mathbb R^n
\rtimes \mathrm{O}(n)$, any element of $\Gamma$ acts on $\mathbb R^n$ by a rotation and by a translation  in a canonical way.


By the classical Bieberbach theorems (see \cite{bieb1, bieb2}), $\R^n\cap \Gamma$ is a lattice and the quotient $G=\R^n/(\R^n\cap \Gamma)$ is a finite group called the
{\it holonomy group} of $M$. This leads to an exact sequence:
$$0\rightarrow \mathbb Z^n\stackrel{\iota}{\longrightarrow} \Gamma \stackrel{\pi}{\longrightarrow}G\rightarrow 1$$
where $\pi$ is the quotient map.  $M$ is said to be of {\it diagonal (holonomy) type} if  the induced
representation $\rho :G\to \mbox{GL}(n,\mathbb Z)$
is diagonal.

It is worth noting that, given an orientable closed flat Riemannian manifold $M$ with the fundamental group $\Gamma =\pi_1(M, p)$, the quotient map
$\pi: \Gamma \to G$, can be obtained by constructing a real orthogonal representation $\Gamma$ defined by translating  a vector
of the tangent space $TM_p$ at a fixed point $p\in M$ by the parallel
vector field along a loop at $p\in M$.
Since parallel transports of a given vector along homotopic loops produce the same
resulting vector, this give us a well-defined representation of $\Gamma$ into $\mathrm{O}(n)$. Restricting to its image, we recover the quotient map $\pi: \Gamma \to G$.

It follows that the holonomy group of any finite cover of $M$  is a subgroup of $G$.

We denote by $\mathrm{Spin}(n)$ the universal covering group of $\mathrm{SO}(n)$. We also write $\lambda_n:\mathrm{Spin}(n)\to \mathrm{SO}(n)$ for the  covering homomorphism. A {\it spin structure} on a smooth orientable manifold $M$ is an equivariant lift of its orthonormal frame bundle via the covering $\lambda_n$.  Equivalently, $M$ has a spin structure if and only if  the second Stiefel-Whitney class $w_2(M)$ vanishes (see \cite[p.~33-34]{kirby}).

\begin{deff}\rm Given a closed oriented flat manifold $M=\R^n/\Gamma$. We say that $M$ is {\it minimal non-spin} if it is non-spin and every finite cover with the holonomy group that is a proper subgroup of $G$ has a spin structure. 
\end{deff}
Let us point out that every closed oriented flat manifold  with holonomy group $\Z_2$ has a spin structure  (see \cite[Theorem 3.1(3)]{HS}, {\cite[Proposition 4.2]{MP}}). For any $d\in \N$,  set {\small $$n(d) = {d+1\choose 2} + \left\{ \begin{array}{cl}
2&\mbox{$d = 0$ mod $2$}\\
1&\mbox{$d = 1$ mod $4$}\\
3&\mbox{$d = 3$ mod $4$}
\end{array}\right.$$}

\noindent Our main result is the following theorem.
\begin{thmm}\label{thmm}
For any integer $d \geq 2$,  there exists a closed oriented flat manifold $M_d$  of rank $n(d)$ with holonomy group $\Z_2^d$  with the second Stiefel-Whitney class $w_2(M_d)\ne 0$ such that every finite cover with the holonomy group that is a proper subgroup of $\Z_2^d$ has all vanishing Stiefel-Whitney classes.
\end{thmm}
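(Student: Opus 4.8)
The plan is to reduce the entire statement to a computation in the mod-$2$ cohomology ring of the holonomy group, $H^*(G;\Z_2)=H^*(\Z_2^d;\Z_2)=\Z_2[x_1,\dots,x_d]$. Since $M_d=\R^{n}/\Gamma$ with $n=n(d)$ is aspherical, $H^*(M_d;\Z_2)=H^*(\Gamma;\Z_2)$, and the Bieberbach extension $0\to\Z^{n}\to\Gamma\to G\to1$ has a Lyndon--Hochschild--Serre spectral sequence with $E_2=\Z_2[x_1,\dots,x_d]\otimes\Lambda(t_1,\dots,t_{n})$; the $G$-action on $H^*(\Z^{n};\Z_2)$ is trivial because each diagonal entry $\pm1$ equals $1$ mod $2$. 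As the holonomy representation is diagonal, $TM_d\cong\bigoplus_i L_i$ splits as a sum of flat real line bundles indexed by the characters $\chi_i\in\operatorname{Hom}(G,\Z_2)=H^1(G;\Z_2)$, so by the Whitney sum formula $w(M_d)=\phi(\prod_i(1+\chi_i))$, where $\phi\colon H^*(G;\Z_2)\to H^*(M_d;\Z_2)$ is the edge map. The ideal $\ker\phi$ agrees in degree $2$ with the span of the transgressions $d_2(t_k)=r(\alpha_k)$, the mod-$2$ reductions of the components $\alpha_k\in H^2(G;\Z_{\chi_k})$ of the extension class (here $\Z_{\chi_k}$ is $\Z$ twisted by $\chi_k$, and $d_2(t_k)\in\ker(\mathrm{Sq}^1+\chi_k\cup(\cdot))$). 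For a cover $M_H$ with holonomy $H\le G$ one has $TM_H=p_H^*TM_d$, hence $w(M_H)=p_H^*w(M_d)=\phi^H(\operatorname{res}^G_H\prod_i(1+\chi_i))$. Both assertions thus become statements about the polynomial $P=\prod_i(1+\chi_i)$ and the ideals generated by the (restricted) classes $d_2(t_k)$.

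For the characters I would take the $d$ singletons $x_1,\dots,x_d$ together with the $\binom d2$ pairs $x_i+x_j$ ($i<j$), already $\binom{d+1}2$ of them, and then adjoin $c(d)\in\{1,2,3\}$ further characters. Orientability is the requirement $w_1=\phi(\sum_i\chi_i)=0$, i.e. $\sum_i\chi_i=0$ in $H^1(G;\Z_2)$; the singletons and pairs sum to $d\sum_k x_k$, which vanishes for $d$ even but equals $\sum_k x_k$ for $d$ odd. The number and choice of extra characters is therefore forced by the residue of $d$ modulo $4$, both to restore orientability and to adjust the square part of $w_2$, and this is precisely the origin of the three cases in the formula for $n(d)$. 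The extension classes $\alpha_k$, equivalently the $d_2(t_k)$ subject to $d_2(t_k)\in\ker(\mathrm{Sq}^1+\chi_k\cup(\cdot))$, must then be selected so that $\Gamma$ is torsion-free; I would pin this down by prescribing explicit half-integer translation vectors for the generators and checking the standard fixed-point-free criterion. This torsion-free verification is the first genuinely technical point.

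Next I would compute the degree-$2$ part $e_2:=\sum_{i<j}\chi_i\chi_j$ of $P$ in $\Z_2[x_1,\dots,x_d]$. A direct count over singletons and pairs gives coefficient $\binom d2\bmod2$ on each $x_p^2$ and $(d-1)\bmod2$ on each $x_px_q$, modified by a single square $\psi^2$ contributed by the extra characters. Since $\ker\phi$ in degree $2$ is exactly the span of the $d_2(t_k)$, the non-spin condition $w_2(M_d)\ne0$ is equivalent to $e_2\notin\mathrm{span}\{d_2(t_1),\dots,d_2(t_{n})\}$, a finite linear-algebra check in $\Z_2[x_1,\dots,x_d]_2$ for the chosen extension classes.

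Finally, for every proper subgroup $H<G$ I must prove that all Stiefel--Whitney classes of $M_H$ vanish, i.e. $w(M_H)=1$. Because $\ker\phi^H$ is an ideal containing each $\operatorname{res}^G_H d_2(t_k)$ in degree $2$, it suffices to establish the single congruence $\operatorname{res}^G_H(P)\equiv1$ modulo the ideal generated by $\{\operatorname{res}^G_H d_2(t_k)\}$ in $H^*(H;\Z_2)$, which disposes of all Stiefel--Whitney degrees simultaneously. The conceptual content is that $P-1$ becomes expressible through the extension classes after any proper restriction, whereas on $G$ its degree-$2$ part escapes their span; since the degree-$2$ essential cohomology of $\Z_2^d$ is zero (the lowest essential class has degree $2^d-1$), such an escape can only be produced by the simultaneous interaction of all $d$ coordinates, which any proper restriction destroys. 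I would organize the subgroups by the maximal ones, using that restriction sends each $x_i$ to a linear form and then reducing to that case. I expect the main obstacles to be the torsion-free verification of the constructed group and this last uniform argument across all proper $H$ and all degrees at once; the mod-$4$ bookkeeping in the choice of extra characters is exactly what reconciles $w_2(M_d)\ne0$ on $G$ with the vanishing on every proper cover.
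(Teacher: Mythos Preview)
Your framework---diagonal holonomy, LHS spectral sequence, edge map $\phi$, and the reduction of $w(M)$ to the polynomial $P=\prod_j(1+\chi_j)$ modulo the ideal generated by the transgressions---is exactly the machinery the paper sets up. But the proposal stops short of the one structural idea that makes the construction work, and your heuristic for the proper-cover step is misleading. The paper does \emph{not} take all singletons $x_i$ as characters; its matrix $A_0$ contributes $d-1$ coordinates with \emph{trivial} character (entries in $\{0,1\}$) whose sole purpose is to force the transgressions $\theta_l=x_l^2+x_d^2$ into the characteristic ideal, while $A_1$ contributes the pairs $x_i+x_j$ and supplies the generators $x_ix_j$. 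The resulting ideal is precisely
\[
J=\langle x_i^2+x_j^2,\ x_ix_j : i\neq j\rangle\subset\F_2[x_1,\dots,x_d],
\]
engineered so that $\mathcal C_2=P_2/J_2$ is \emph{one-dimensional} (spanned by $[x_1^2]$) and $\mathcal C_{\ge3}=0$. This is the content you are missing: you never specify the translation data, so your ideal is undetermined, and with a generic choice there is no reason for $\mathcal C_2$ to collapse to a line.

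That one-dimensionality is what makes the proper-subgroup argument a two-line computation rather than an obstacle. Since the bilinear form $\mathcal C_1\times\mathcal C_1\to\mathcal C_2\cong\F_2$ induced by multiplication is nondegenerate, every nonzero $y\in\mathcal C_1$ hits the generator $[x_1^2]$ via some $z$. For a proper $H<G$ one has $\dim\mathcal C_1'<\dim\mathcal C_1$, so the restriction $j^*:\mathcal C\to\mathcal C'$ kills some nonzero $y$; then $j^*([x_1^2])=j^*(y)j^*(z)=0$, and since $j^*$ is surjective this forces $\mathcal C_2'=0$, hence $\mathcal C_{\ge2}'=0$ and $w(M_H)=1$. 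Your appeal to essential cohomology points in the wrong direction: the vanishing of degree-$2$ essential classes says every degree-$2$ class restricts \emph{nontrivially} to some proper subgroup, whereas what you need is that $w_2$ restricts \emph{into the span of the restricted transgressions} on every proper subgroup---a statement about a quotient, not about restriction maps themselves, and one that fails without control of the ideal. The torsion-free (freeness) check, by contrast, is routine once the matrices are written down: every nontrivial row-sum of $A_0$ already contains a $1$.
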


\noindent The following corollary is immediate.

\begin{corr}\label{corr}
For any integer $d \geq 2$, there exists a closed oriented flat manifold of rank
$n(d)$ with holonomy group $\Z_2^d$ of diagonal type which is minimal  non-spin.
\end{corr}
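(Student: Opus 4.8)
The plan is to derive the corollary directly from the Theorem by unwinding the definition of \emph{minimal non-spin} and applying the criterion, recalled in the excerpt, that an orientable manifold carries a spin structure precisely when its second Stiefel-Whitney class vanishes. So I would begin by fixing an integer $d \geq 2$ and letting $M_d$ be the rank-$n(d)$ closed oriented flat manifold with holonomy group $\Z_2^d$ supplied by the Theorem.

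First I would verify that $M_d$ is non-spin. The Theorem asserts $w_2(M_d) \neq 0$, and by the stated equivalence this says exactly that $M_d$ admits no spin structure. Next I would check the covering clause in the definition. Recall from the excerpt that the holonomy group of any finite cover of $M_d$ is a subgroup of $\Z_2^d$, so the covers appearing in the definition of minimal non-spin are precisely those whose holonomy is a proper subgroup. For every such cover the Theorem guarantees that all Stiefel-Whitney classes vanish; in particular its second Stiefel-Whitney class is zero, so by the same equivalence it carries a spin structure. Combining this with the non-spin conclusion and the definition shows that $M_d$ is minimal non-spin.

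The only point not literally contained in the statement of the Theorem is that $M_d$ is of \emph{diagonal type}, i.e.\ that its holonomy representation $\rho : \Z_2^d \to \GL(n(d),\Z)$ is diagonal. I would record that the construction producing $M_d$ takes place entirely within the class of diagonal-type flat manifolds, which is the standing setting of the paper, so this property is built into $M_d$ by construction. There is essentially no genuine obstacle here; the only care required is to confirm that the family of covers quantified over in the definition of minimal non-spin coincides with the family to which the Theorem applies, and these match because both range over finite covers whose holonomy is a proper subgroup of $\Z_2^d$. Assembling these observations yields the corollary.
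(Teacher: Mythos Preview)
Your proposal is correct and matches the paper's approach: the paper simply declares the corollary ``immediate'' from the Theorem, and what you have written is exactly the unpacking of why it is immediate, using the $w_2=0$ criterion for spin and the definition of minimal non-spin. Your remark that $M_d$ is of diagonal type by construction is also accurate, since the manifold in the Theorem is built explicitly from a defining matrix $F$ giving a diagonal $\Z_2^d$-action on a torus.
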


\noindent This result is in stark contrast to the case of real Bott manifolds which in part motivated our discussion. Real Bott manifolds are  a special type of flat manifolds with diagonal holonomy. By a result of  A.~G\k{a}sior (see \cite[Theorem 1.2]{gacior}), it follows that a real Bott manifold with holonomy group of even $\Z_2$-rank has a spin structure if and only if all its finite covers with holonomy group $\Z_2^2$ have a spin structure.  Our examples {show} that the general case of diagonal flat manifolds is much more complicated.

\section{Characterising diagonal flat manifolds}

In this section we give a combinatorial description of diagonal flat manifolds. This language will be essential in our analysis of the Steifel-Whitney classes of such manifolds.

Suppose we have a short exact sequence of groups
\begin{equation}\label{hwes}
0\to\Z^n\stackrel{\iota}\longrightarrow\Gamma\stackrel{\pi}\longrightarrow G\to 1.
\end{equation}
\noindent


 \noindent We shall call $\Gamma$ {\it diagonal} or {\it diagonal type}  if the image of the induced representation $\rho:G\to \mathrm{GL}(n, \Z)$:
 $$\rho(g)(z)= \iota^{-1}(\gamma  \iota(z) \gamma^{-1}), \;\;\forall g \in
G, \;\pi(\gamma)=g, \gamma\in \Gamma, \forall  z\in \Z^n,$$ 
  is a subgroup of the group of diagonal matrices $D\cong\Z_2^n\subseteq \GL(n,\Z)$ where    
$$D = \{A=[a_{ij}]\in\GL(n, \Z)\hskip 2mm|\hskip 2mm a_{ij} = 0, i\neq j; a_{ii} = \pm 1, 1\leq i,j\leq n\}.$$ 
It follows that $G = \Z_2^k$ for some $1\leq k\leq n-1$.

Let $S^1$ be the unit circle in $\C$. As in  \cite{PS},
we consider the automorphisms $g_i:S^1\to S^1,$ given by
\begin{equation}\label{dictionary}
g_0(z) = z, \; g_1(z) = -z, \; g_2(z) = \bar{z}, \; g_3(z) = -\bar{z}, \; \; \forall z\in S^1.
\end{equation}
Equivalently, with the identification $S^1 = \R/\Z,$ for any $[t]\in \R/\Z$ we have:
\begin{equation}\label{dictionary1}
g_0([t]) = [t], \;\; g_1([t]) =
\bigg[t+\frac{1}{2}\bigg], \;\; g_2([t]) = [-t], \;\;  g_3([t]) =\bigg[-t+\frac{1}{2}\bigg].
\end{equation}
Let ${\cal D} = \langle g_i\mid i = 0,1,2,3\rangle.$ It is easy to see that ${\cal D} \cong \Z_2\times\Z_2$ and $g_3 = g_1 g_2.$ We define an action ${\cal D}^n$ on $T^n$ by
\begin{equation}\label{action}
(t_1,\dots,t_n)(z_1,\dots, z_n) = (t_1 z_1,\dots,t_n z_n),
\end{equation}
for $(t_1,\dots,t_n)\in {\cal D}^n$ and $(z_1,\dots,z_n)\in T^n = \underbrace{S^1\times\dots\times S^1}_n.$

\noindent Any subgroup $\Z_2^d\subseteq {\cal D}^n$ defines a $({d\times n} )$-matrix with entries in $ {\cal D}$ which in turn defines a matrix $A$ with entries in the set $S=\{0, 1, 2, 3\}$ under the identification $i \leftrightarrow g_i$,  $0\leq i \leq 3$. Note that we can add distinct rows of $A$ to obtain a row vector with entries in $S$. 
 
 We have the following characterisation of the action of $\Z_2^d$ on $T^n$ and the associated  orbit space
 $T^n/\Z_2^d$ via the matrix $A$.

\begin{lm}\label{char}  Let $\Z_2^d\subseteq {\cal D}^n$  and $A\in S^{d\times n}$. Then,
\begin{enumerate}[(i)]
\item the action of $\Z_2^d$ on $T^n$ is free if and only if there is  $1$ in the sum of any distinct collection of rows of $A$,
\item  $\Z_2^d$ is the holonomy group of $T^n/\Z_2^d$ if and only if there is either $2$ or $3$ in each row of $A$.
\end{enumerate}
\end{lm}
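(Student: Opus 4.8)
The plan is to reduce everything to two elementary facts about the four maps $g_i$ acting on $S^1=\R/\Z$. First, $g_i$ is fixed-point free precisely when $i=1$: the map $g_0$ fixes every point, $g_2([t])=[-t]$ fixes $\{[0],[\tfrac12]\}$ and $g_3([t])=[-t+\tfrac12]$ fixes $\{[\tfrac14],[\tfrac34]\}$, while $g_1([t])=[t+\tfrac12]$ has no fixed point. Second, the orthogonal part of $g_i$ on $\R$ is $+1$ for $i\in\{0,1\}$ and $-1$ for $i\in\{2,3\}$, since $g_0,g_1$ are translations and $g_2,g_3$ are reflections. The first fact drives part (i), the second part (ii). Throughout I identify a non-trivial element $\sigma\in\Z_2^d$ with the generalized row $(g_{c_1},\dots,g_{c_n})$ obtained by summing the corresponding non-empty collection of distinct rows of $A$.

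For (i), since $\Z_2^d$ acts coordinatewise on $T^n$, a point $(z_1,\dots,z_n)$ is fixed by $\sigma$ iff $g_{c_j}(z_j)=z_j$ for every $j$; such a point exists iff each $g_{c_j}$ has a fixed point on $S^1$. By the first fact this fails, so that $\sigma$ is fixed-point free, exactly when some $c_j=1$, i.e. when the row of $\sigma$ contains a $1$. The action is free iff every non-trivial $\sigma$ is fixed-point free, which gives the stated criterion. I would simply write out the two implications using that a coordinatewise action fixes a point iff every factor does.

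For (ii), I would pass to the Bieberbach picture. Lifting the action through $\R^n\to T^n=\R^n/\Z^n$ presents $M=T^n/\Z_2^d$ as $\R^n/\Gamma$, where $\Gamma$ is generated by $\Z^n$ together with one affine lift $x\mapsto B_\sigma x+b_\sigma$ of each generator; by the second fact $B_\sigma\in D$ is the diagonal sign matrix whose $j$-th entry is $-1$ exactly when the $j$-th entry of the row of $\sigma$ lies in $\{2,3\}$. This produces the extension $0\to\Z^n\to\Gamma\to\Z_2^d\to0$, and by definition the holonomy group is $\Gamma/(\Gamma\cap\R^n)$, which is the image $\rho(\Z_2^d)$ of the diagonal representation $\sigma\mapsto B_\sigma$. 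Hence the holonomy group equals $\Z_2^d$ iff $\rho$ is faithful.

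The \emph{main obstacle} is to pin down the translation lattice $\Gamma\cap\R^n$ exactly. An element $\sigma$ lifts to a pure translation iff $B_\sigma=\mathrm{id}$, i.e. iff the row of $\sigma$ has all entries in $\{0,1\}$; by part (i) such a $\sigma$, if non-trivial, contributes a genuine half-integer translation, so $\Gamma\cap\R^n$ strictly contains $\Z^n$ and collapses the corresponding part of the holonomy. Thus $(\Gamma\cap\R^n)/\Z^n$ corresponds to $\ker\rho$, and $\rho$ is faithful iff no non-trivial $\sigma$ has trivial orthogonal part, i.e. iff the sum of every non-empty collection of distinct rows contains a $2$ or a $3$. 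Here one must read the condition, as in (i), in terms of sums of rows rather than single rows: a matrix each of whose rows contains a $2$ or $3$ may still have two rows with the same reflection pattern, whose sum contains no $2$ or $3$, in which case the holonomy drops to a proper quotient of $\Z_2^d$.
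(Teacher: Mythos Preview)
The paper gives no proof of this lemma at all --- it simply calls it routine and omits it --- so there is nothing to compare against. Your argument for (i) is precisely the intended one: each $g_i$ with $i\neq 1$ has a fixed point on $S^1$, a product action has a fixed point iff every factor does, and the nontrivial elements of $\Z_2^d$ are exactly the nonempty sums of distinct rows.

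Your treatment of (ii) is also correct, and in fact sharper than the paper's own statement. You correctly identify the holonomy with the image of $\rho\colon\sigma\mapsto B_\sigma$, so that the holonomy is the full $\Z_2^d$ iff no nontrivial sum of rows lies entirely in $\{0,1\}$. The paper's formulation, requiring only that each \emph{individual} row contain a $2$ or $3$, is literally too weak: for $d=2$ the rows $(1,2,0)$ and $(0,2,1)$ each contain a $2$, the action is free, yet their sum $(1,0,1)$ is a pure half-translation and the holonomy collapses to $\Z_2$. So your reading of condition (ii) in terms of sums of rows is the right one. In the paper's applications this does not cause trouble: the matrices $F$ constructed in Section~4 happen to satisfy the stronger condition (the $A_1$ block alone handles every proper nonempty subset of rows, and the extra $B$- or $C$-columns handle the full sum), so the downstream arguments are unaffected.
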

\noindent When the action of $\Z_2^d$ on $T^n$ defined by (\ref{action}) is free, we will say that the associated  matrix $A$ is {\it free} and we will call it the {\it defining matrix} of $T^n/\Z_2^d$. In addition, when $\Z_2^d$ is the holonomy group of $T^n/\Z_2^d$, we will say  that $A$ is {\it effective}.

The proof of the above lemma is routine and shall be omitted.

\section{Interpreting Stiefel-Whitney classes}
We use defining matrices of diagonal flat manifolds to express their characteristic algebras and Stiefel-Whitney classes using the language introduced in the previous section.

To simplify notation,  we identify $i \leftrightarrow g_i$ for $i = 0,1,2,3.$  Let us consider the epimorphisms:
\begin{equation}\label{pppp}
\al, \bt:{\cal D}\to\F_2 = \{0,1\},
\end{equation}
where the values of $\al$ and  $\bt$ on $\cal D$ are given in the following table:
\begin{center}
\begin{tabular}{|c|c|c|c|c|}
\hline
& $0$ & $1$ & $2$ & $3$\\
\hline\hline
$\al$ & $0$ & $1$ & $1$ & $0$\\
\hline
$\bt$ & $0$ & $1$ & $0$ & $1$\\
\hline
\end{tabular}
\end{center}
\vskip 1mm
\centerline{\footnotesize{Table 1: $\alpha$ and $\beta$ on $\mathcal D$}}
\medskip
For $j = 1,\dots ,n$, and $\Z_2^{d}\subseteq {\cal D}^n$ we define the epimorphisms:
\begin{equation}\label{proj}
\al_j:\Z_2^{d}\subseteq {\cal D}^n\stackrel{pr_j}\longrightarrow {\cal D}\stackrel{{\alpha}}\to\F_2, \;\; \bt_j:\Z_2^{d}\subseteq {\cal D}^n\stackrel{pr_j}\longrightarrow {\cal D}\stackrel{{\beta}}\to\F_2
\end{equation}
by:
$$\al_j(t_1,\dots,t_n) = \al(t_j), \;\; \bt_j(t_1,\dots ,t_n) = \bt(t_j).$$

\noindent Using definitions of $\alpha$, $\beta$ and the translations given in the equation \eqref{dictionary1}, we obtain the following lemma.
\begin{lm}\label{holon}Suppose a subgroup $\Z_2^{d}\subseteq {\cal D}^n$ acts freely on $T^n$.
Then a holonomy representation $\varphi \colon \Z_2^d \to \GL(n,\Z)$ of the flat manifold $T^n/\Z_2^{d}$ is given by
\[
\forall_{x \in \Z_2^d} \; \varphi(x) = \operatorname{diag}\big((-1)^{(\alpha_1+\beta_1)(x)}, \ldots, (-1)^{(\alpha_n+\beta_n)(x)}\big).
\]
\end{lm}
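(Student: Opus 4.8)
The plan is to identify the holonomy representation $\varphi$ with the action of $\Z_2^d$ on the lattice $\Z^n=\pi_1(T^n)$ induced by the coordinatewise action \eqref{action}, and then to read off from Table~1 that the function $\alpha+\beta$ detects precisely the orientation behaviour of each factor $g_i$ on $S^1$. Since freeness of the action (Lemma~\ref{char}(i)) guarantees that $\Gamma=\pi_1(T^n/\Z_2^d)$ is a genuine torsion-free Bieberbach group fitting into \eqref{hwes} with $G=\Z_2^d$, the representation $\varphi$ is well defined and it suffices to evaluate the conjugation formula for $\rho$.

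First I would recall how the holonomy representation arises concretely. Writing elements of $\Gamma\subseteq\R^n\rtimes\GL(n,\Z)$ as affine maps $(v,a)$ acting by $x\mapsto ax+v$, a short computation of the conjugation in $\rho(g)(z)=\iota^{-1}(\gamma\,\iota(z)\,\gamma^{-1})$ gives $(v,a)(m,I)(v,a)^{-1}=(am,I)$ for every lattice vector $m$. Hence $\varphi(x)$ is nothing but the linear part of any lift $\gamma\in\Gamma$ of $x$, independent of the chosen lift and of the translation vector.

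Next I would compute this linear part coordinate by coordinate. Because the action \eqref{action} is diagonal, the induced action on $\Z^n=\bigoplus_{j=1}^n\Z$ splits as a direct sum in which the $j$-th summand is acted on by $pr_j(x)\in\mathcal{D}$. Lifting the maps of \eqref{dictionary1} to the universal cover $\R$ of $S^1=\R/\Z$, one sees that $g_0,g_1$ lift to $t\mapsto t$ and $t\mapsto t+\tfrac12$, hence have linear part $+1$, whereas $g_2,g_3$ lift to $t\mapsto -t$ and $t\mapsto -t+\tfrac12$, hence have linear part $-1$. Thus the $j$-th diagonal entry of $\varphi(x)$ is $+1$ when $pr_j(x)\in\{g_0,g_1\}$ and $-1$ when $pr_j(x)\in\{g_2,g_3\}$.

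Finally I would match this against Table~1: summing the two rows yields $(\alpha+\beta)(g_i)=0$ for $i\in\{0,1\}$ and $(\alpha+\beta)(g_i)=1$ for $i\in\{2,3\}$, so that $(-1)^{(\alpha_j+\beta_j)(x)}=(-1)^{(\alpha+\beta)(pr_j(x))}$ agrees exactly with the $j$-th linear part just computed. Assembling the $n$ coordinates gives the asserted diagonal matrix. I expect the only genuinely delicate point to be the first step, namely confirming that the algebraically defined conjugation $\rho$ computes the geometric linear part of a lift; once that identification is in place, the remainder is the bookkeeping encoded in Table~1.
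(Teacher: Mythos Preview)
Your argument is correct and follows exactly the approach the paper indicates: the paper simply remarks that the lemma is obtained ``using definitions of $\alpha$, $\beta$ and the translations given in the equation \eqref{dictionary1}'', and you have spelled out precisely that computation. The only difference is that you make explicit the identification of the holonomy with the linear part of a lift and the coordinate-by-coordinate check against Table~1, whereas the paper leaves these steps to the reader.
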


Since 
$ H^1(\Z_2^{d};\F_2)=Hom(\Z_2^{d},\Z_2) $ we can view $\al_j$ and $\bt_j$ as $1$-cocycles and define:
\begin{equation}\label{transgression}
\te_j = \al_j\cup\bt_j\in H^2(\Z_2^{d};\F_2),
\end{equation}
where $\cup$ denotes the cup product.
It is well-known that
$$H^{\ast}(\Z_2^{d};\F_2)\cong \F_{2}[x_1,\dots,x_{d}]$$ where $\{x_1,\dots, x_d\}$ is a basis of $H^{1}(\Z_2^{d},\F_2)$.
Hence, the elements
$\al_j$ and $\bt_j $ correspond to: 
\begin{equation}\label{bformula}
\al_j = \sum_{i=1}^{d} \al(pr_{j}(b_{i}))x_{i},\;\; \bt_j = \sum_{i=1}^{d} \bt(pr_{j}(b_{i}))x_{i}    \in \F_{2}[x_{1},\dots,x_{d}],
\end{equation}
where $\{b_1,\dots,b_{d}\}$ is the standard basis of $\Z_{2}^{d}$ and $j = 1,\dots,n$ (cf. \cite[Proposition 1.3]{CMR}).
\vskip 2mm
\noindent
Moreover, from definition of the matrix $A\in {S}^{d\times n}$
we can write (\ref{bformula}) and (\ref{transgression}) write as:
\begin{equation}\label{abs_theta}
\al_j = \sum_{i=1}^{d} \al(A_{i,j})x_{i},  \;\; \bt_j = \sum_{i=1}^{d} \bt(A_{i,j})x_{i}, \;\; \te^A_j {=\al_{j}\cup\bt_{j}}= \al_{j}\bt_{j} .  
\end{equation}

Next, we will make use of the Lyndon-Hochschild-Serre spectral sequence $\{E_{r}^{p,q}, d_r\}$  associated to the group extension of (\ref{hwes}).
Since  $\Gamma$ is of diagonal type, we have:
$$E_{2}^{p,q} \cong H^{p}(\Z_2^{d};\F_2)\otimes H^{q}(\Z^{n}; \F_2).$$
There is an
exact sequence: 
\begin{equation}\label{transg}
\begin{split}
0\to  H^1(\Z_2^{d};\F_2)&\stackrel{\pi^{\ast}}\longrightarrow  H^1(\Gamma;\F_2) 
\\
 & \stackrel{\iota^{\ast}}\longrightarrow H^1(\Z^n;\F_2)\stackrel{d_2}\longrightarrow H^2(\Z_2^{d};\F_2)\stackrel{\pi^{\ast}}\longrightarrow H^2(\Gamma;\F_2), 
\end{split}
\end{equation}
where $d_2$ is the transgression and $\pi^{\ast}$ is induced by the quotient map $\pi :\Gamma\to \Z_2^{d}$ (e.g.~\cite[Corollary 7.2.3]{evens}).
\begin{prop}\label{key_sw} Suppose $\Z_2^{d}$ acts freely and diagonally on $T^n$. Let $M=T^n/\Z_2^{d}$, $\Gamma=\pi_1(M)$ and consider the  associated to the group extension of (\ref{hwes}). Then
\begin{enumerate}[(i)]
\item $\te_l=d_2(\varepsilon_l)$, $\forall 1\leq l\leq n$ , where $\{\varepsilon_1,\ldots,\varepsilon_n\}$ is the basis of $H^1(\Z^n,\F_2)$ dual to the standard basis of $\Z^n\otimes \F_2$.
\item The total Stiefel-Whitney class of $M$ is $$w(M)=\pi^{\ast}\bigg(\prod_{j=1}^n(1+\al_j+\bt_j)\bigg)\in H^{\ast}(\Gamma;\F_2)=H^{\ast}(M;\F_2).$$
\end{enumerate}
\end{prop}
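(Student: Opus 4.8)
The plan is to treat the two parts separately: part~(i) is a transgression computation in the Lyndon--Hochschild--Serre spectral sequence, while part~(ii) follows from the classical fact that the tangent bundle of a flat manifold is the flat bundle carrying its linear holonomy, which here is diagonal. The two parts are logically independent, but as I note at the end, part~(i) is what makes the formula of part~(ii) usable for detecting $w_2$.

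For part~(i), I would first record that although $\Z_2^d$ acts on $\Z^n$ through the sign representation $\varphi$ of Lemma~\ref{holon}, the induced action on $H^1(\Z^n;\F_2)=\mathrm{Hom}(\Z^n,\F_2)$ is trivial, since $\pm 1\equiv 1\pmod 2$. Hence each $\varepsilon_l$ is $\Z_2^d$-invariant and lies in $E_2^{0,1}$, so that $d_2(\varepsilon_l)$ is defined, and in fact every dual basis element is transgressive. The key structural input is the standard description of the transgression on $E_2^{0,1}$ for an abelian normal subgroup: writing $e\in H^2(\Z_2^d;\Z^n)$ for the extension class of \eqref{hwes}, its reduction mod $2$ lands in $H^2(\Z_2^d;\F_2^n)$ with now-trivial coefficients, and $d_2(\varepsilon_l)=(\varepsilon_l)_\ast(e)$, the image under the coordinate projection $\varepsilon_l\colon\F_2^n\to\F_2$.

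It then remains to compute $(\varepsilon_l)_\ast(e)$ at the cochain level and to identify it with $\te_l=\al_l\cup\bt_l$, and this is the main computational obstacle. I would do it one coordinate at a time: the $l$-th factor of the action is a homomorphism $\Z_2^d\to{\cal D}$, and lifting each $g_i$ to the affine map $t\mapsto \sigma t+\tau$ of $\R$, with $\sigma\in\{\pm1\}$ and $\tau\in\{0,\tfrac12\}$ as dictated by \eqref{dictionary1}, produces a set-theoretic section whose factor set in the $l$-th coordinate is the integer correction $f(q,q')=\tau+\sigma\tau'-\tau''$, where $\tau''=(\tau+\sigma\tau')\bmod 1$. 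Reducing this integer mod $2$ and running the four-by-four case analysis over the values $g_0,g_1,g_2,g_3$---using that $\sigma=-1\iff(\al_l+\bt_l)(q)=1$ and $\tau=\tfrac12\iff\bt_l(q)=1$---shows $\varepsilon_l\bigl(f(q,q')\bigr)=\al_l(q)\,\bt_l(q')$, which is exactly the standard representing cocycle of the cup product $\al_l\cup\bt_l$. The only real care needed is to keep the conventions for the sign of the transgression and for the cup product consistent; since we work mod $2$ the sign is irrelevant.

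For part~(ii), I would use that for $M=\R^n/\Gamma$ the tangent bundle is $TM\cong\R^n\times_\Gamma\R^n$, where $\Gamma$ acts on the fibre through its linear (holonomy) part. By Lemma~\ref{holon} this representation is diagonal, so $TM$ splits as a Whitney sum $L_1\oplus\cdots\oplus L_n$ of flat real line bundles, where $L_j$ carries the character $x\mapsto(-1)^{(\al_j+\bt_j)(x)}$ of $\Z_2^d$ pulled back along $\pi$. The first Stiefel--Whitney class of such a flat line bundle is the corresponding element of $\mathrm{Hom}(\Gamma,\F_2)=H^1(\Gamma;\F_2)$, that is $w_1(L_j)=\pi^\ast(\al_j+\bt_j)$, so by the Whitney product formula and the fact that $\pi^\ast$ is a ring homomorphism,
\[
w(M)=\prod_{j=1}^n\bigl(1+\pi^\ast(\al_j+\bt_j)\bigr)=\pi^\ast\Bigl(\prod_{j=1}^n(1+\al_j+\bt_j)\Bigr),
\]
as claimed. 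Here the only point needing genuine verification is that the tangent bundle of the flat manifold really is the flat bundle of the linear holonomy; once the line-bundle decomposition is in place the rest is routine. Finally, combining part~(i) with the exact sequence \eqref{transg} shows that the span of $\te_1,\dots,\te_n$ is precisely the kernel of $\pi^\ast$ in degree two (the image of $d_2$), which is what later lets one decide when the degree-two component of $w(M)$ is nonzero.
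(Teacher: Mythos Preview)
Your argument is correct and is a genuinely more self-contained route than the paper's. For part~(i), the paper quotes a formula for $d_2(\varepsilon_l)$ from \cite[Theorem~2.5(ii), Proposition~1.3]{CMR}, writes it out as a quadratic polynomial in the $x_i$, expands $\te_l=\al_l\bt_l$ separately via \eqref{abs_theta}, and then matches coefficients monomial by monomial. You instead identify the transgression with the $l$-th coordinate of the mod-$2$ extension class and compute that cocycle directly from the affine lifts of \eqref{dictionary1}, which explains \emph{why} the cup product $\al_l\cup\bt_l$ appears rather than merely verifying it. For part~(ii), the paper again cites a general formula $w(M)=\pi^*\prod_j(1+\varphi^*(x_j'))$ from \cite[Proposition~3.2]{CMR} and \cite{LS} and checks that $\varphi^*(x_j')=\al_j+\bt_j$ using Lemma~\ref{holon}; your line-bundle decomposition $TM\cong L_1\oplus\cdots\oplus L_n$ via the diagonal holonomy, together with the Whitney formula, is exactly the content underlying that cited result, so you are effectively reproving what the paper imports. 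The trade-off is clear: the paper's proof is shorter but leans on external references, while yours is longer but stands alone and makes the geometry of the flat structure visible.
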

\begin{proof}
By Theorem 2.5(ii) and   Proposition 1.3 of \cite{CMR} and using \eqref{dictionary1}, it follows that
\[
d_2(\varepsilon_l) = \sum_{A_{il}=1} x_i^2 + \sum_{i\ne j} x_ix_j,
\]
where the second sum is taken for such $i,j$ that
\[
(A_{il},A_{jl}) \in \{ (1,2),(2,1),(1,3),(3,1),(3,2),(2,3) \}.
\]
On the other hand
\[
\te_l = \alpha_l \beta_l = \sum_{i=1}^d \alpha(A_{il})\beta(A_{il})x_i^2 + \sum_{1 \leq i < j \leq d} \big( \alpha(A_{il})\beta(A_{jl}) + \alpha(A_{jl})\beta(A_{il}) \big) x_ix_j.
\]
Comparing coefficients of the above two polynomials finishes the proof of (i).

For the second part of the proposition, note that the image of the holonomy representation $\varphi$ of $M$, defined in Lemma \ref{holon}, is a subgroup of the group $D$ of diagonal matrices of $\GL(n,\Z)$. Now, let $\{x_1',\ldots,x_n'\}$ be the standard basis of $H^1(D,\Z_2)$ (i.e.~$x_j'$ checks whether the $j$-th entry of the diagonal is $\pm 1$). Using Proposition 3.2 of \cite{CMR} ( see also (2.1) of \cite{LS}), we have:
\[
w(M) = \pi^*\bigg(\prod_{j=1}^n(1+\varphi^*(x_j'))\bigg).
\]
Furthermore, for every $1 \leq l \leq d$ and $1 \leq j \leq n$, we have
\[
\varphi^*(x_j')(b_l) = x_j'(\varphi(b_l)) = (\alpha_j+\beta_j)(b_l)
\]
and the result follows.
\end{proof}


We observe that by part (i) of Proposition \ref{key_sw} the image of the differential $d_2$ is the ideal   generated by $\te_j$-s:
$$\langle {Im } (d_2)\rangle=\langle \te_1, \dots, \te_n\rangle\subseteq \F_2[x_1,x_2,...,x_{d}].$$
Given $A\in {S}^{d\times n}$, using (\ref{abs_theta}), we will set $I_A=\langle \te^A_1, \dots, \te^A_n\rangle$ and call it the {\it characteristic ideal} of $A$.
The quotient  ${\cal C}_A = \F_2[x_1,...,x_d]/I_A$ will be the {\it characteristic algebra} of $A$. Whenever there is no confusion, we will suppress the subscripts.

\begin{cor}\label{char} Suppose $\Z_2^{d}$ acts freely and diagonally on $T^n$. There is a canonical homomorphism of graded algebras $\phi:{\cal C}\to H^{*}(T^n/\Z_{2}^{d};\F_2)$
such that $\phi([w])=w(T^n/\Z_{2}^{d})$ where $[w]$ is the class of 
\begin{equation}\label{abs_sw}
w=\prod_{j=1}^n(1+\al_j+\bt_j)\in \F_2[x_1,x_2,...,x_{d}].
\end{equation}
  Moreover, $\phi$ is a monomorphism in degree less that or equal to two.
\end{cor}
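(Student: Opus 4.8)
The plan is to build $\phi$ directly from the ring homomorphism $\pi^{\ast}\colon H^{\ast}(\Z_2^{d};\F_2)=\F_2[x_1,\dots,x_d]\to H^{\ast}(\Gamma;\F_2)=H^{\ast}(M;\F_2)$ and to check that it descends to the quotient $\mathcal{C}=\F_2[x_1,\dots,x_d]/I_A$. For this I would first observe that each generator $\te^A_l$ of the characteristic ideal lies in $\ker\pi^{\ast}$. Indeed, by Proposition \ref{key_sw}(i) we have $\te^A_l=d_2(\varepsilon_l)\in\operatorname{Im}(d_2)$, and the exact sequence (\ref{transg}) identifies $\operatorname{Im}(d_2)$ with $\ker\big(\pi^{\ast}|_{H^2}\big)$. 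Since $\pi^{\ast}$ is a ring homomorphism, it then annihilates the whole ideal $I_A=\langle\te^A_1,\dots,\te^A_n\rangle$ (any element is $\sum_l f_l\te^A_l$, which maps to $\sum_l\pi^{\ast}(f_l)\pi^{\ast}(\te^A_l)=0$), and because $I_A$ is homogeneous, $\pi^{\ast}$ factors through the graded algebra $\mathcal{C}$, yielding the graded algebra map $\phi$. The identity $\phi([w])=w(M)$ is then immediate from Proposition \ref{key_sw}(ii), which states exactly that $\pi^{\ast}(w)=w(M)$.

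The second assertion, injectivity in degrees $\leq 2$, I would prove degree by degree, the point being that the homogeneous ideal $I_A$ coincides with $\ker\pi^{\ast}$ in each of these low degrees. In degree $0$ there is nothing to prove. In degree $1$ the ideal contributes nothing, since its generators are homogeneous of degree $2$; hence $\mathcal{C}_1=H^1(\Z_2^{d};\F_2)$ and $\phi|_{\mathcal{C}_1}=\pi^{\ast}|_{H^1}$, which is injective by the exactness of (\ref{transg}) at its left-hand end. In degree $2$, because $I_A$ is generated by the degree-$2$ elements $\te^A_l$, its degree-$2$ component is exactly the $\F_2$-span of $\te^A_1,\dots,\te^A_n$, which equals $\operatorname{Im}(d_2)$ by Proposition \ref{key_sw}(i). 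Thus $\mathcal{C}_2=H^2(\Z_2^{d};\F_2)/\operatorname{Im}(d_2)$, and $\phi|_{\mathcal{C}_2}$ is the map induced by $\pi^{\ast}$ on this quotient; since $\operatorname{Im}(d_2)=\ker\big(\pi^{\ast}|_{H^2}\big)$ by (\ref{transg}), the induced map is injective.

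The only real subtlety is the degree-$2$ step, where I must carefully distinguish the ideal $I_A$ from the $\F_2$-linear span of its generators: these agree in degree $2$ precisely because the $\te^A_l$ are homogeneous of degree $2$ and the polynomial ring has nothing in negative degree, so the only ideal elements of degree $2$ are $\F_2$-combinations of the generators themselves. Once this identification $(I_A)_2=\operatorname{Im}(d_2)$ is secured, the exactness of (\ref{transg}) does all the work, matching $\ker\pi^{\ast}$ in degree $2$ exactly to what has been quotiented out in forming $\mathcal{C}$.
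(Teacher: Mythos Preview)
Your proposal is correct and follows exactly the approach the paper intends: the paper's one-line proof simply says that $\phi$ is induced by $\pi^{\ast}$ and that everything follows from the exact sequence (\ref{transg}), and your argument is a faithful unpacking of that sentence. In particular, your identification $(I_A)_2=\operatorname{span}_{\F_2}\{\te^A_l\}=\operatorname{Im}(d_2)$ via Proposition~\ref{key_sw}(i) and your use of exactness at $H^1$ and $H^2$ are precisely the details behind the paper's claim.
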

\begin{proof} This follows directly from the exact sequence (\ref{transg}), with $\phi$ induced by the algebra homomorphism $\pi^*:H^*(\Z_2^{d};\F_2)\rightarrow H^*(\Gamma;\F_2)$. \end{proof}


\begin{dfn}\label{sw(A)}\rm
Given a matrix $A\in {S}^{d\times n}$, we define the {\it Stiefel-Whitney class} of $A$, denoted $w(A)$, to be the class $[w] \in \mathcal C_A$ defined by  (\ref{abs_sw}). 
\end{dfn}

\begin{cor}\label{abs_sw_rel} Suppose $A\in {S}^{d\times n}$ is free and $T^n/\Z_{2}^{d}$ is the corresponding flat manifold. Then $\phi(w(A))=w(T^n/\Z_{2}^{d}).$
\end{cor}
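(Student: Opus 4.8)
The plan is to recognise that this corollary merely records, under the standing notation, an identity that has already been established in the preceding results. Since $A$ is free, by definition the diagonal $\Z_2^d$-action on $T^n$ of \eqref{action} is free, so $T^n/\Z_2^{d}$ is a genuine flat manifold and Corollary \ref{char} applies to it. In particular, that corollary furnishes the graded-algebra homomorphism $\phi\colon {\cal C}_A\to H^{*}(T^n/\Z_2^{d};\F_2)$ together with the assertion that $\phi([w])=w(T^n/\Z_2^{d})$ for the class $[w]$ of the polynomial $w=\prod_{j=1}^n(1+\alpha_j+\beta_j)$ from \eqref{abs_sw}.

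I would then simply chase the definitions. By Definition \ref{sw(A)}, the class $w(A)\in{\cal C}_A$ is \emph{by construction} the image $[w]$ in ${\cal C}_A=\F_2[x_1,\dots,x_d]/I_A$ of this same polynomial $w$. Substituting $w(A)=[w]$ into the equality supplied by Corollary \ref{char} yields $\phi(w(A))=\phi([w])=w(T^n/\Z_2^{d})$, which is exactly the claim.

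There is no genuine obstacle here: the entire content of the statement is contained in Corollary \ref{char} and Definition \ref{sw(A)}, and the only thing one must check is that the element $w$ appearing in the two places is literally the same element of $\F_2[x_1,\dots,x_d]$. This is immediate, since in both cases $w$ is given by the identical formula \eqref{abs_sw} in terms of the classes $\alpha_j,\beta_j$ attached to the matrix $A$ via \eqref{abs_theta}. The role of the corollary is therefore purely organisational: it certifies that the combinatorially defined invariant $w(A)$ of a free matrix $A$ computes the topological total Stiefel--Whitney class of the flat manifold it defines, so that all subsequent computations may be carried out at the level of the polynomial algebra $\F_2[x_1,\dots,x_d]$ and its characteristic ideal $I_A$.
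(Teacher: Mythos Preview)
Your proposal is correct and matches the paper's treatment: the paper gives no separate proof of this corollary, treating it as immediate from Corollary~\ref{char} together with Definition~\ref{sw(A)}, which is exactly the chain of identifications you spell out.
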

Next, we derive several properties of the Stiefel-Whitney classes and of characteristic ideals which will be key to our discussion later on.

\begin{lm}\label{sw_pr}
Let $A\in {S}^{d\times m}$, $B\in {S}^{d\times n}$  and
$[A, B]\in {S}^{d\times (m + n)}$.  Then,
\begin{enumerate}[(i)]
\item $w([A,B]) = w(A) w(B);$
\item  $I_{[A,B]} = I_{A} + I_{B};$
\item if $j$-column of $A$ has only elements $\{0,2\}$ or $\{0,3\}$, then $\te_{j}^{A} = 0.$
\end{enumerate}
\end{lm}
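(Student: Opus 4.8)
The plan is to reduce all three parts to direct computations in the polynomial ring $\F_2[x_1,\dots,x_d]$, exploiting the fact that the linear forms attached to a column of a defining matrix depend only on that column. Writing $\al_j^A=\sum_{i=1}^d\al(A_{i,j})x_i$ and $\bt_j^A=\sum_{i=1}^d\bt(A_{i,j})x_i$ for the forms associated to the $j$-th column of $A$ (and similarly for $B$ and $[A,B]$), I would first record the key observation: by construction the columns of $[A,B]$ are the columns of $A$ in positions $1,\dots,m$ followed by those of $B$ in positions $m+1,\dots,m+n$. Hence $\al_j^{[A,B]}=\al_j^A$, $\bt_j^{[A,B]}=\bt_j^A$ and $\te_j^{[A,B]}=\te_j^A$ for $1\le j\le m$, while $\al_{m+k}^{[A,B]}=\al_k^B$, $\bt_{m+k}^{[A,B]}=\bt_k^B$ and $\te_{m+k}^{[A,B]}=\te_k^B$ for $1\le k\le n$, all as identities in $\F_2[x_1,\dots,x_d]$.

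Part (ii) is then immediate: the generators of $I_{[A,B]}$ are exactly the generators $\te_1^A,\dots,\te_m^A$ of $I_A$ together with the generators $\te_1^B,\dots,\te_n^B$ of $I_B$, so $I_{[A,B]}=I_A+I_B$. For part (i), I would work first at the polynomial level, where the block structure splits the defining product as
$$\prod_{j=1}^{m+n}\!\big(1+\al_j^{[A,B]}+\bt_j^{[A,B]}\big)=\Big(\prod_{j=1}^m\!\big(1+\al_j^A+\bt_j^A\big)\Big)\Big(\prod_{k=1}^n\!\big(1+\al_k^B+\bt_k^B\big)\Big),$$
that is, the defining polynomial of $[A,B]$ is the product of those of $A$ and $B$; passing to classes in $\mathcal C_{[A,B]}$ (using (ii)) then yields $w([A,B])=w(A)w(B)$. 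Part (iii) follows by reading Table 1: since $\bt(0)=\bt(2)=0$, a column with entries only in $\{0,2\}$ gives $\bt_j^A=0$ and hence $\te_j^A=\al_j^A\bt_j^A=0$; since $\al(0)=\al(3)=0$, a column with entries only in $\{0,3\}$ gives $\al_j^A=0$ and again $\te_j^A=0$.

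The only point requiring care is the meaning of the equality in (i). The classes $w(A)$, $w(B)$ and $w([A,B])$ a priori live in the three distinct algebras $\mathcal C_A$, $\mathcal C_B$ and $\mathcal C_{[A,B]}$, so the identity must be read inside $\mathcal C_{[A,B]}$, multiplying the images of $w(A)$ and $w(B)$ under the canonical surjections $\mathcal C_A\to\mathcal C_{[A,B]}$ and $\mathcal C_B\to\mathcal C_{[A,B]}$ furnished by (ii). This is the main (and essentially only) subtlety, which is why I would establish (ii) before (i) even though the statement lists them in the opposite order; the remaining computations are routine.
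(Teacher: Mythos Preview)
Your proof is correct and follows essentially the same route as the paper's own argument: identify the columns of $[A,B]$ with those of $A$ and $B$, split both the product defining $w$ and the generating set of the characteristic ideal accordingly, and read off (iii) from Table~1. Your version is in fact slightly more careful than the paper's: you make explicit where the equality $w([A,B])=w(A)w(B)$ takes place (in $\mathcal C_{[A,B]}$ via the surjections coming from (ii)), and you correctly attribute the vanishing in (iii) to $\beta$ on $\{0,2\}$ and $\alpha$ on $\{0,3\}$, whereas the paper's proof has these swapped.
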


\begin{proof} 
By Definition \ref{sw(A)}, we have:
$$w([A,B]) =\prod_{j=1}^{m}(1+\al_j+\bt_j)\prod_{j=m+1}^{m+n}(1+\al_j+\bt_j)\in \F_2[x_1,x_2,...,x_{d}]$$ where $$\al_j = \sum_{i=1}^{d} \al(A_{i,j})x_{i},  \;\; \bt_j = \sum_{i=1}^{d} \bt(A_{i,j})x_{i}, \;\;\; \forall \; 1\leq j\leq m$$
and
$$\al_j = \sum_{i=1}^{d} \al(B_{i,j})x_{i},  \;\; \bt_j = \sum_{i=1}^{d} \bt(B_{i,j})x_{i}, \;\;\; \forall \; m+1\leq j\leq m+n.$$
Therefore, $w([A,B]) = w(A) w(B).$

To prove (ii), recall that  $I_{[A,B]}=\langle \te^{[A,B]}_1, \dots, \te^{[A,B]}_n\rangle$ with $\te^{[A,B]}_j = \al_{j}\bt_{j}$.  Note that, $\te^{[A,B]}_j =\te^{A}_j$ when $1\leq j\leq m$ and $\te^{[A,B]}_j =\te^{B}_{j{-m}}$ when $m+1\leq j\leq m+n$. Hence. $I_{[A,B]} = I_{A} + I_{B}$. 

Part (iii) follows from that fact that $\te^{A}_j = \al_{j}\bt_{j}$ and that $\al_{j}=0$ on $\{0,2\}$ and $\bt_{j}=0$ on $\{0,3\}$.

\end{proof}

\section{Proof of main theorem}

To define minimal non-spin manifolds we will make use of the following matrices:
\begin{enumerate}[{1.}]
\item $A_{0} = \left[
\begin{array}{l}
I_{(d-1)}\\
r
\end{array}
\right ] \in {S}^{d\times (d-1)}  ,$
where $I_{(d-1)}$ is the identity matrix and $r=(1,\dots,1)$.
\item $A_1=\big[c_1, \dots, c_{d(d-1)/2}\big] \in {S}^{d\times d(d-1)/2}$  with columns $c_k=2e_i + 3e_j$ for all $i < j$ ordered in lexicographical order. Here, $e_i$ denotes the column vector with $1$ in the $i$-th coordinate and $0$ everywhere else.
\item Let $A = [A_0, A_1]$, $B = 2(e_1 + e_2 + ... + e_d) \in {S}^{d\times 1},$ and $C = 2e_1 \in {S}^{d\times 1}.$ 
\item Let $E$ be the free matrix
$$E = \left\{ \begin{array}{cc}
[A,B,C,C]& \mbox{$d = 0$ mod $2$}\\
\left[A,B,B\right]&\mbox{$d = 1$ mod $4$}\\
A& \mbox{$d = 3$ mod $4$}
\end{array}
\right.$$
\item Finally, let $F\in {S}^{d\times n(d)}$ be the free and effective matrix defined by 
$$ F = \left\{ \begin{array}{lc}
E & \mbox{$d\neq 3$ mod $4$}\\
\left[E,C,C,C,C\right] & \mbox{$d = 3$ mod $4$}
\end{array}\right.$$
\end{enumerate}

Note that $$n(d) = {d+1\choose 2} + \left\{ \begin{array}{cl}
2&\mbox{$d = 0$ mod $2$}\\
1&\mbox{$d = 1$ mod $4$}\\
3&\mbox{$d = 3$ mod $4$}
\end{array}\right.$$\\

Let $\sigma_i$ be the $i$-th {elementary} symmetric {polynomial} on variable $\{x_1, \dots ,x_d\}.$ Consider the ideal ${J}\subseteq \F_2[x_1, \dots ,x_d]$ defined by: 
$$\boxed{J=\{x_{i}^2 + x_{j}^2\mid i\neq j\}+ \{x_{i}x_{j}\mid x_i\neq x_j\}}$$
\begin{lm} The matrix $A$ is free, $I_A= {J}$ and $$w({A}) = \big[(1 + \sigma_1)^{d-1}\big]\in \mathcal C_A= \F_2[x_1, \dots ,x_d]/J.$$   
\end{lm}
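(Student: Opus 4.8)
The plan is to compute the three claimed facts about $A=[A_0,A_1]$ directly from the combinatorial definitions, handling $A_0$ and $A_1$ separately and then combining via Lemma~\ref{sw_pr}. First I would establish $I_A = J$. By Lemma~\ref{sw_pr}(ii), $I_A = I_{A_0} + I_{A_1}$, so I would compute each piece. For $A_0 = \left[\begin{smallmatrix} I_{(d-1)} \\ r \end{smallmatrix}\right]$, the $j$-th column (for $1\leq j\leq d-1$) has a $1$ in row $j$ (from $I_{(d-1)}$) and a $1$ in row $d$ (from $r$), and $0$ elsewhere. Using \eqref{abs_theta} with the values of $\alpha,\beta$ from Table~1, both $\alpha$ and $\beta$ equal $1$ on the symbol $1$ and vanish on $0$, so $\alpha_j = x_j + x_d = \beta_j$, giving $\theta_j^{A_0} = \alpha_j\beta_j = (x_j+x_d)^2 = x_j^2 + x_d^2$ (in characteristic $2$). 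Thus $I_{A_0} = \langle x_j^2 + x_d^2 \mid 1\leq j\leq d-1\rangle$, which visibly equals the first generating set $\{x_i^2 + x_j^2 \mid i\neq j\}$ of $J$. For $A_1$, the column $c_k = 2e_i + 3e_j$ (with $i<j$) has the symbol $2$ in row $i$, the symbol $3$ in row $j$, and $0$ elsewhere. Since $\alpha(2)=1,\beta(2)=0$ and $\alpha(3)=0,\beta(3)=1$, I get $\alpha_{c_k} = x_i$ and $\beta_{c_k} = x_j$, so $\theta^{A_1}_{c_k} = x_i x_j$. Ranging over all $i<j$ yields $I_{A_1} = \langle x_i x_j \mid i\neq j\rangle$, matching the second generating set of $J$. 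Hence $I_A = I_{A_0}+I_{A_1} = J$.

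Next I would verify that $A$ is free using Lemma~\ref{char}(i): I must check that the sum of any nonempty collection of distinct rows of $A$ contains a $1$. A cleaner route is to observe that $A$ is free if and only if the holonomy action has no fixed points, equivalently if no nontrivial element of $\Z_2^d$ acts as a pure translation-free stabiliser; concretely, a subset $T\subseteq\{1,\dots,d\}$ of rows must sum to a row vector containing the symbol $1$. Focusing on the columns coming from $A_0$: for a single row index $\{k\}$ with $k\leq d-1$, column $k$ already has a $1$. For the row $\{d\}$ alone, the vector $r=(1,\dots,1)$ supplies $1$'s. For a general subset $T$, I would exhibit one $A_0$-column whose entries over the rows in $T$ sum to $1$ in $\mathcal D$; since the $A_0$ columns realise $e_j + e_d$ patterns (in $\mathcal D$-symbols) plus the diagonal, summing rows produces a $1$ whenever $T$ is nonempty. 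I expect this to reduce to a short case analysis on whether $d\in T$, and I would confirm it by noting that adding rows in $\mathcal D$ corresponds to coordinatewise addition in $\Z_2\times\Z_2$, where $1$ appears precisely when the $\alpha$-sum is $1$; the structure of $I_{A_0}$ showing $\alpha_j = x_j + x_d$ guarantees that for any nonzero $(x_1,\dots,x_d)$ some $\alpha_j$ is nonzero, which is exactly freeness.

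Finally I would compute $w(A)$ in $\mathcal C_A = \F_2[x_1,\dots,x_d]/J$. By Lemma~\ref{sw_pr}(i), $w(A) = w(A_0)\,w(A_1)$. From the column computations above, the factor of $w$ for an $A_0$-column $j$ is $1 + \alpha_j + \beta_j = 1 + (x_j+x_d) + (x_j+x_d) = 1$ in characteristic $2$, so $w(A_0) = \prod_{j=1}^{d-1} 1 = 1$. For $A_1$, the column $c_k$ contributes $1 + \alpha_{c_k} + \beta_{c_k} = 1 + x_i + x_j$, so $w(A_1) = \prod_{i<j}(1 + x_i + x_j)$. Thus $w(A) = \prod_{i<j}(1 + x_i + x_j)$, and the substance of the proof is to show this equals $(1+\sigma_1)^{d-1}$ modulo $J$. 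Here the key simplification is that modulo $J$ every mixed product $x_ix_j$ (for $i\neq j$) vanishes and all squares $x_i^2$ are identified with a single element, call it $s = x_1^2$. I would expand $\prod_{i<j}(1+x_i+x_j)$ and reduce each monomial modulo $J$: any term containing $x_i x_j$ with $i\neq j$ dies, so only products of the linear terms that avoid cross terms survive, and repeated factors collapse to powers of $s$.

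I expect the main obstacle to be precisely this last reduction: controlling the expansion of $\prod_{i<j}(1+x_i+x_j)$ modulo $J$ and matching it against $(1+\sigma_1)^{d-1} = (1+x_1+\dots+x_d)^{d-1}$. The natural strategy is to show both sides agree degree by degree in $\mathcal C_A$, where $\mathcal C_A$ has a transparent monomial basis (spanned by $1$, the $x_i$, and powers of $s$, since all other products vanish). In degree $0$ both are $1$; in degree $1$ the coefficient of each $x_i$ should match by a counting argument — on the left, $x_i$ appears in the factor $(1+x_i+x_j)$ for each $j\neq i$, contributing $(d-1)$ copies and hence coefficient $d-1 \bmod 2$, which agrees with the binomial coefficient from $(1+\sigma_1)^{d-1}$; in degree $2$ and higher, I would use that all surviving contributions are powers of $s=x_i^2$, reducing the identity to a comparison of coefficients of $s^m$, which I expect to follow from a binomial/parity identity. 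An alternative cleaner argument is to note that modulo $J$ one has the relation $x_ix_j=0$ and $x_i^2=x_j^2$, under which $(1+x_i+x_j)^{?}$-type factorisations linearise; I would look for an identity of the form $\prod_{i<j}(1+x_i+x_j)\equiv(1+\sigma_1)^{d-1}$ that can be proved by pairing factors or by induction on $d$, using $\sigma_1 = x_1+\dots+x_d$ and the vanishing of cross terms to telescope the product. The parity bookkeeping in characteristic $2$ is the delicate point, so I would carry out the induction carefully, verifying the base cases $d=2,3$ by hand to fix the normalisation.
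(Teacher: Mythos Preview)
Your computation of $I_A = J$ and the factorisation $w(A) = w(A_0)w(A_1) = \prod_{i<j}(1+x_i+x_j)$ match the paper's argument exactly. Two points deserve comment.

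\textbf{Freeness.} Your argument via the $A_0$-columns alone is incomplete. You claim that $\alpha_j = x_j + x_d$ (for $1 \le j \le d-1$) being nonzero for every nonzero vector is ``exactly freeness'', but for the all-ones vector $(1,\dots,1)$ every such $\alpha_j$ vanishes, so no $A_0$-column produces the symbol~$1$. You genuinely need $A_1$ here: whenever $|T| \ge 2$, pick $i<j$ in $T$ and the column $2e_i + 3e_j$ sums (in $\mathcal D$) to $2+3 = 1$ over the rows of $T$; the $A_0$-columns then handle $|T|=1$. (The paper itself just asserts freeness without argument, so your attempt is more honest --- it simply needs this extra case.)

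\textbf{The reduction to $(1+\sigma_1)^{d-1}$.} You propose induction on $d$ or matching ``coefficients of $s^m$'' for all $m$, but this overcomplicates matters because $\mathcal C_A$ vanishes in degrees $\ge 3$: any monomial of degree at least three lies in $J$, e.g.\ $x_i^3 = x_i(x_i^2 + x_j^2) + x_j(x_ix_j) \in J$ for any $j\ne i$. So there are no higher powers of $s$ to track, and both $\prod_{i<j}(1+x_i+x_j)$ and $(1+\sigma_1)^{d-1}$ need only be compared through degree~$2$. The paper does precisely this: it writes the degree-$\le 2$ truncation of the product as
\[
1 + (d-1)\sigma_1 + d(d-2)\sigma_2 + \binom{d-1}{2}\sigma_1^2,
\]
kills the middle term using $\sigma_2 \in J$, and recognises the remainder as the degree-$\le 2$ truncation of $(1+\sigma_1)^{d-1}$. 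Once you observe that $\mathcal C_A$ is concentrated in degrees $\le 2$, your degree-$0$ and degree-$1$ counts already give two-thirds of the argument, and degree~$2$ is a single symmetric-function identity rather than an induction.
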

\begin{proof} The matrix $A$ is clearly free by definition. To see that $J=I_A$,  note that, by Lemma \ref{sw_pr}, we have  $I_{A}=I_{A_0}+I_{A_1}$.  Recall that $I_{A_0}=\langle \te^{A_0}_1, \dots, \te^{A_0}_{d-1}\rangle$   with
$\te^{A_0}_l = \al_{l}\bt_{l}$ for $1\leq l\leq d-1$. Now, we have:
\begin{align*}
\te^{A_0}_l &= \alpha_l \beta_l \\
&= \sum_{i=1}^d \alpha({A_0}_{il})\beta({A_0}_{il})x_i^2 + \sum_{1 \leq i < j \leq d} \big( \alpha({A_0}_{il})\beta({A_0}_{jl}) + \alpha({A_0}_{jl})\beta({A_0}_{il}) \big) x_ix_j\\
&=x_l^2+x_d^2.
\end{align*}

\noindent Similarly, $I_{A_1}=\langle \te^{A_1}_1, \dots, \te^{A_1}_{ d(d-1)/2}\rangle$  and $\te^{A_1}_l= x_ix_j$ for all $1 \leq i < j \leq d$. It is easy to see now that $J=I_{A_0}+I_{A_1}$.

To prove the last claim, we write:
\begin{align*}
w(A)= & w({A_0})w({A_1})  \\
= & w({A_1})\\
=&\big[\prod_{i < j}(1 + x_i + x_j)\big] \\
=&\big[1 + (d-1)\sigma_1 + {d(d-2)}\sigma_2 + {d-1\choose 2}\sigma_{1}^{2}\big].
\end{align*}
Since $\sigma_2 \in {J},$ it follows that 
\begin{align*}
w(A) =& \big[1 + (d-1)\sigma_1 +  {d-1\choose 2}\sigma_{1}^{2}\big]\\
=& \big[(1 + \sigma_1)^{d-1}\big]. 
\end{align*}

\end{proof}

Let $\varphi:P_2\to\F_2$ be the linear extension of the map given by
$\varphi(x_{i}^2) = 1$ and $\varphi(x_{i}x_{j}) = 0,$ for $i\neq j$, where $P_2$ denotes the space of homogenous polynomials of degree two. We make the following observations.
\begin{lm} Let $J_2 = \{x\in {J}\mid x \mbox{ is an element of degree } 2\}$. Then $J_2= Ker (\varphi).$ 
\end{lm}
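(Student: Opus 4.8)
The plan is to establish the set equality $J_2 = \ker(\varphi)$ by showing both inclusions, working entirely within the degree-two homogeneous part $P_2$ of $\F_2[x_1,\dots,x_d]$. First I would fix a convenient basis of $P_2$: the monomials $x_i^2$ for $1 \leq i \leq d$ together with the products $x_i x_j$ for $1 \leq i < j \leq d$. With respect to this basis, $\varphi$ simply reads off the sum (in $\F_2$) of the coefficients of the pure squares $x_i^2$ and ignores the mixed terms, so $\ker(\varphi)$ is the hyperplane in $P_2$ consisting of those degree-two polynomials whose coefficients of the squares sum to zero.

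Next I would identify the degree-two part of the ideal $J$ explicitly. Recall from the boxed definition that $J$ is generated by $\{x_i^2 + x_j^2 \mid i \neq j\}$ and $\{x_i x_j \mid x_i \neq x_j\}$; here every listed generator is itself homogeneous of degree two, so the degree-two part $J_2$ is exactly the $\F_2$-span of these generators (no lower-degree generators exist to be multiplied up into degree two, and multiplying a degree-two generator by anything nonscalar lands in degree $\geq 3$). Thus $J_2 = \langle x_i^2 + x_j^2,\ x_i x_j \rangle_{\F_2}$ as a subspace of $P_2$. For the inclusion $J_2 \subseteq \ker(\varphi)$, I would simply check $\varphi$ on generators: $\varphi(x_i^2 + x_j^2) = 1 + 1 = 0$ and $\varphi(x_i x_j) = 0$, and extend by linearity.

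For the reverse inclusion $\ker(\varphi) \subseteq J_2$, I would take an arbitrary $p \in \ker(\varphi)$, write $p = \sum_i a_i x_i^2 + \sum_{i<j} b_{ij} x_i x_j$ with $a_i, b_{ij} \in \F_2$, and use the kernel condition $\sum_i a_i = 0$ to rewrite $p$ as a combination of the generators. The mixed part $\sum_{i<j} b_{ij} x_i x_j$ already lies in $J_2$ term by term. For the square part, the condition $\sum_i a_i = 0$ means an even number of the $a_i$ equal $1$; I would pair up these squares and write each pair's contribution $x_i^2 + x_j^2$ as a generator of $J_2$, showing $\sum_i a_i x_i^2 \in J_2$. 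Hence $p \in J_2$.

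The content here is genuinely light — each step is a short linear-algebra verification over $\F_2$ — so I do not anticipate a serious obstacle. The only point deserving care is the homogeneity argument that $J_2$ equals the span of the listed degree-two generators rather than something larger: one must confirm that no products of generators with polynomials of positive degree can contribute new elements in degree exactly two, which holds precisely because all generators of $J$ are themselves of degree two. Once that is noted, both inclusions are immediate.
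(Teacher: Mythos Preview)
Your argument is correct and follows essentially the same linear-algebra approach as the paper, just with the details written out: the paper's proof is the single sentence ``The spaces $J_2$ and $\ker(\varphi)$ have the same basis,'' whereas you explicitly identify that common spanning set $\{x_i^2+x_j^2,\ x_ix_j\}$ and verify both inclusions.
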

\begin{proof} The spaces ${J}_2$ and  $Ker(\varphi)$ have the same basis.
\end{proof}

\begin{lm}\label{J}
We have 
\begin{enumerate}[(i)]
\item $I_B = 0$ and $w({B}) = [1 + \sigma_1].$
\item $I_C = 0$ and $w(C) = [1 + x_1].$
\item The matrix $E$ is free and and $I_E =J.$
\end{enumerate}
\end{lm}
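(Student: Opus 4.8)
The plan is to dispatch all three parts directly, using Lemma \ref{sw_pr} together with the freeness criterion of Lemma \ref{char}, and the already-established fact (from the preceding lemma) that $A$ is free with $I_A = J$. For parts (i) and (ii), I would first observe that the single columns of $B = (2,\dots,2)^{T}$ and $C = (2,0,\dots,0)^{T}$ have all their entries in $\{0,2\}$. Lemma \ref{sw_pr}(iii) then yields $\theta^B_1 = \theta^C_1 = 0$, so that $I_B = I_C = 0$ immediately. For the Stiefel--Whitney classes I would compute $\alpha_1,\beta_1$ straight from the values of $\alpha,\beta$ in Table 1: since $\alpha(2)=1$ and $\beta(2)=0$, for $B$ we get $\alpha_1 = \sum_{i=1}^d x_i = \sigma_1$ and $\beta_1 = 0$, giving $w(B) = [1+\alpha_1+\beta_1] = [1+\sigma_1]$; for $C$ only the first coordinate contributes, so $\alpha_1 = x_1$, $\beta_1 = 0$, and $w(C) = [1+x_1]$. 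Note that since $I_B=I_C=0$ the ambient algebras $\mathcal C_B,\mathcal C_C$ are the full polynomial ring, so these classes are literally the displayed polynomials.

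For part (iii), I would establish freeness by noting that adjoining columns to a free matrix preserves freeness. Indeed, by Lemma \ref{char}(i), if a nonempty collection of rows of $A$ sums to a vector containing a $1$, then the same collection of rows of any block matrix $[A,X]$ sums to a vector still containing that $1$ in the corresponding $A$-column. Since $A$ is free, and in every case $E$ is $A$ with extra columns appended (or $A$ itself when $d\equiv 3\bmod 4$), it follows that $E$ is free. For the ideal, I would apply the additivity $I_{[A,B]}=I_A+I_B$ of Lemma \ref{sw_pr}(ii), extended by a trivial induction to several blocks: in each of the three defining cases of $E$, the ideal $I_E$ equals $I_A$ plus copies of $I_B$ and $I_C$, so using $I_A=J$ and $I_B=I_C=0$ we conclude $I_E=J$ uniformly.

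None of these steps is genuinely difficult; the computations reduce to bookkeeping with facts already in hand. The one point I would take care to state explicitly is the preservation of freeness under adjoining columns, since this is the only place the combinatorial criterion of Lemma \ref{char} is actually invoked, and it is precisely what lets me avoid re-verifying the freeness condition for all $2^d-1$ nonempty row subsets of $E$ by hand. Everything else is a direct substitution from Table 1 and an application of the structural properties of $w(\cdot)$ and $I_{(\cdot)}$ recorded in Lemma \ref{sw_pr}.
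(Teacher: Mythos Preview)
Your proposal is correct and follows essentially the same approach as the paper: parts (i) and (ii) are direct computations from the definitions (the paper just says they ``follow directly from definitions''), and part (iii) combines $I_B=I_C=0$ with Lemma~\ref{sw_pr}(ii) and the earlier identity $I_A=J$. Your treatment is in fact more complete, since you explicitly justify the freeness of $E$ via the observation that appending columns preserves the row-sum criterion of Lemma~\ref{char}(i), whereas the paper simply asserts $E$ is free in its definition without further comment.
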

\begin{proof}The first two claims follow directly from definitions. For the  proof of the last claim,  note that, by parts (i) and (ii), we have
that $I_E = I_A$. This finishes the claim, since $I_A=J$.
\end{proof}

\begin{prop}\label{total}
The flat manifold $M$ defined by the matrix $E$ has $w(M)=[1+x_1^2]\in \mathcal C$. In particular, $M$ is oriented, it does not have a spin structure and $w_i(M)=0$ for all $i>2$.
\end{prop}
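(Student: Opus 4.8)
The plan is to compute $w(E)\in\mathcal C$ explicitly and then read off the topological consequences. First note that by Lemma \ref{J}(iii) the characteristic algebra of $E$ is $\mathcal C=\F_2[x_1,\dots,x_d]/J$, the same as for $A$, so all three cases live in one fixed quotient ring. Since the Stiefel--Whitney class is multiplicative under concatenation of matrices by Lemma \ref{sw_pr}(i), I would expand $w(E)$ as a product of the already-computed factors $w(A)=\big[(1+\sigma_1)^{d-1}\big]$, $w(B)=[1+\sigma_1]$ and $w(C)=[1+x_1]$. This yields $w(E)=\big[(1+\sigma_1)^{d-1}\big]$ when $d\equiv 3\pmod 4$, then $w(E)=\big[(1+\sigma_1)^{d+1}\big]$ when $d\equiv 1\pmod 4$, and $w(E)=\big[(1+\sigma_1)^{d}(1+x_1)^{2}\big]$ when $d$ is even.

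The computational heart of the argument is a short list of relations in $\mathcal C$. Since $J$ forces $x_ix_j=0$ for $i\neq j$ and $x_i^2=x_1^2$ for all $i$, one checks that $\sigma_1^2=\sum_i x_i^2=d\,[x_1^2]$, that $x_i\,[x_1^2]=0$ and hence $\sigma_1^3=0$, and that $(1+x_1)^2=1+[x_1^2]$. In particular $\sigma_1^k=0$ for $k\ge 3$, while $\sigma_1^2=[x_1^2]$ when $d$ is odd and $\sigma_1^2=0$ when $d$ is even. Feeding these into the three expansions and reducing binomial coefficients modulo $2$, the relevant truncations are $1+\binom{d-1}{1}\sigma_1+\binom{d-1}{2}\sigma_1^2$, respectively $1+\binom{d+1}{1}\sigma_1+\binom{d+1}{2}\sigma_1^2$, and $(1+d\,\sigma_1)(1+[x_1^2])$. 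In each case I would verify that the degree-one coefficient is even, hence vanishes, and that the degree-two contribution is exactly $[x_1^2]$: for the two odd cases this reduces to checking that $\binom{d-1}{2}$ is odd when $d\equiv 3\pmod 4$ and $\binom{d+1}{2}$ is odd when $d\equiv 1\pmod 4$, and for the even case it is immediate since $\sigma_1^2=0$ collapses $(1+\sigma_1)^d$ to $1$. The outcome is $w(E)=[1+x_1^2]$ in all three cases. This parity bookkeeping is the only genuine obstacle, and it is precisely what the extra columns $B$ and $C$ in the definition of $E$ are designed to control.

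Finally I would translate the algebra back into topology. Since $E$ is free, Corollary \ref{abs_sw_rel} gives $w(M)=\phi(w(E))=\phi\big([1+x_1^2]\big)$. Because $\phi$ is a graded algebra homomorphism, $w(M)$ has components only in degrees $0$ and $2$; in particular $w_1(M)=0$, so $M$ is orientable, and $w_i(M)=0$ for every $i>2$. For the degree-two part, $w_2(M)=\phi\big([x_1^2]\big)$; since $[x_1^2]$ is the nonzero generator of the one-dimensional space $\mathcal C_2$ (by the identification $J_2=\ker\varphi$) and $\phi$ is a monomorphism in degrees $\le 2$ by Corollary \ref{char}, we conclude $w_2(M)\neq 0$. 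Hence $M$ is oriented and admits no spin structure, with all higher Stiefel--Whitney classes vanishing.
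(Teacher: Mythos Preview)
Your proof is correct and follows essentially the same approach as the paper's: both establish $I_E=J$, use multiplicativity of $w$ under concatenation, and reduce the three cases for $(1+\sigma_1)^m$ in $\mathcal C=\F_2[x_1,\dots,x_d]/J$. The only cosmetic difference is that the paper simplifies via the Frobenius-type identity $(1+\sigma_1)^4=1+\sigma_1^4=1$ in $\mathcal C$ (so $(1+\sigma_1)^{d\pm 1}=(1+\sigma_1)^2$ in the odd cases), whereas you truncate the binomial expansion at degree~$2$ and check the parity of $\binom{d\pm 1}{2}$ directly; both routes are equally short and lead to the same $[1+x_1^2]$.
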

\begin{proof} First, let us observe that $\dim {\cal C}_{2}^{E} = 1$ and ${\cal C}_{i}^{E} = 0$
for $i > 2.$  In fact, the first formula can be seen from the definition ${\cal C}_2= P_2/J_2$. The second formula follows from noting that any homogenous  polynomial in  $\F_2[x_1,\dots , x_d]$  of degree greater than two is in the ideal $J$.

Let us now calculate  the Stiefel-Whitney class $w(M)=w(E)$ of $M$.
We shall consider the following cases.

\smallskip

\noindent {\bf  Case 1} ($d$ is even). We have:
\begin{align*}
w(E)=&w(A)w(B)w(C)\\
=&[(1 + \sigma_1)^{d-1}(1 + \sigma_1)(1 + x_1)^2]\\
=&[(1 + \sigma_1)^d(1 + x_{1}^{2})]\\
=& [(1 + \sigma_{1}^{2})^{d/2}(1 + x_{1}^{2})].
\end{align*}
Since $d$ is even,  $\sigma_{1}^{2}$ is a sum of even number of squares. Hence,  $\sigma_{1}^{2}\in {J}$
and $w(E) = [1 + x_{1}^{2}].$
Therefore, $w_i(M) = 0$ for $i\ne 2$ and $w_2(M) =[x_{1}^{2}].$ But $x_{1}^{2}\notin J$
because $\varphi(x_{1}^{2})\neq 0.$

\smallskip

\noindent {\bf Case 2} ($d = 1$ mod $4$). We have:
\begin{align*}
w(E)=&w(A)w(B)^2\\
=&[(1 + \sigma_1)^{d + 1}]\\
=&[(1 + \sigma_1)^2]\\
=& [1 + \sigma_{1}^{2}].
\end{align*}
As above, $M$ is orientable and has no spin structure since $\varphi( \sigma_{1}^{2}) = d={1}$.

\smallskip

\noindent {\bf Case 3} ($d = 3$ mod $4$). We have:
\begin{align*}
w(E)=&[(1 + \sigma_1)^{d - 1}]\\
=&[(1 + \sigma_1)^2]\\
=& [1 + \sigma_{1}^{2}].
\end{align*}
Hence, as above, $M$ is orientable and has no spin structure.
\end{proof}

\begin{prop} \label{second_prop} Let $M=T^n/{\Z_2^d}$ be the flat manifold defined by the matrix $E$. Let $M'$ be a finite cover of $M$, $\Gamma=\pi_{1}(M)$, $\Gamma'=\pi_{1}(M')$ and $i:\Gamma'\to \Gamma$ be the inclusion corresponding to the covering. Suppose $\Gamma'/(\pi_1(T^n)\cap \Gamma')\cong \Z_2^k$ with $k<d$. Then $M'$ has trivial Stiefel-Whitney classes. 

\end{prop}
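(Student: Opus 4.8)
The plan is to reduce to the holonomy cover of $M$ and then show that the single potentially nonzero class $w_2$ restricts to zero. Set $H:=\pi(\Gamma')\cong\Z_2^k$ and $\Gamma'':=\pi^{-1}(H)\supseteq\Gamma'$, so that $\Gamma''\cap\Z^n=\Z^n$ and $M'':=\R^n/\Gamma''=T^n/H$ is again a diagonal flat manifold defined by the matrix $E_H\in S^{k\times n}$ whose rows are the $\mathcal D$-sums of the rows of $E$ corresponding to a chosen basis of $H$. The inclusion $\Gamma'\hookrightarrow\Gamma''$ corresponds to a finite covering $g\colon M'\to M''$, and since the tangent bundle of a covering pulls back we have $w(M')=g^{\ast}(w(M''))$. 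Hence it suffices to prove $w(M'')=1$. Moreover, by Proposition \ref{total} we have $w(M)=1+w_2(M)$ with $w_2(M)=\pi^{\ast}(x_1^2)$ and $w_i(M)=0$ for $i\neq 0,2$; as $w(M'')$ is the pullback of $w(M)$ along the graded ring map induced by the covering $M''\to M$, every $w_i(M'')$ with $i\neq 0,2$ vanishes automatically, and only $w_2(M'')=0$ remains to be shown.

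Next I would compute $w_2(M'')$ combinatorially. Writing $\iota_H\colon H\hookrightarrow\Z_2^d$ and $\pi''\colon\Gamma''\twoheadrightarrow H$, the equality $\pi\circ(\Gamma''\hookrightarrow\Gamma)=\iota_H\circ\pi''$ and naturality give $w_2(M'')=(\pi'')^{\ast}\operatorname{res}^{\Z_2^d}_H(x_1^2)=(\pi'')^{\ast}(v_1^2)$, where $v_l:=\operatorname{res}^{\Z_2^d}_H(x_l)\in H^1(H;\F_2)$. Since $H$ acts freely and diagonally on $T^n$, Corollary \ref{char} applies to $M''=T^n/H$: the map $(\pi'')^{\ast}$ factors through the canonical monomorphism $\phi''\colon\mathcal C_{E_H}\to H^{\ast}(M'';\F_2)$, which is injective in degrees $\leq 2$. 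Thus $w_2(M'')=0$ is equivalent to $v_1^2\in I_{E_H}$. Here I use that $\al,\bt$ are homomorphisms $\mathcal D\to\F_2$, so that forming the linear forms $\al_j,\bt_j$ commutes with passing from $E$ to $E_H$; concretely $\al_j^{E_H}=\operatorname{res}(\al_j^{E})$ and $\bt_j^{E_H}=\operatorname{res}(\bt_j^{E})$, whence $\te_j^{E_H}=\operatorname{res}(\te_j^{E})$. Applying $\operatorname{res}$ to the generators $x_i^2+x_j^2$ and $x_ix_j$ of $J=I_E$ (Lemma \ref{J}), we conclude that $I_{E_H}$ is generated by $\{\,v_iv_j : i\neq j\,\}$ together with $\{\,v_l^2+v_m^2 : l\neq m\,\}$.

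The crux is then to show $v_1^2\in I_{E_H}$. Since $k<d$, the $d$ classes $v_1,\dots,v_d$ lie in the $k$-dimensional space $H^1(H;\F_2)$ and are therefore linearly dependent: there is a nonempty $T\subseteq\{1,\dots,d\}$ with $\sum_{l\in T}v_l=0$. Multiplying this degree-one relation by $v_m$ for a fixed $m\in T$ gives, in $H^2(H;\F_2)$,
$$0=v_m\sum_{l\in T}v_l=v_m^2+\sum_{l\in T\setminus\{m\}}v_mv_l,$$
so $v_m^2=\sum_{l\in T\setminus\{m\}}v_mv_l$ is a sum of the cross-term generators and hence lies in $I_{E_H}$. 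Finally $v_1^2=v_m^2+(v_1^2+v_m^2)\in I_{E_H}$, using that $v_1^2+v_m^2$ is one of the listed generators. This yields $w_2(M'')=0$, hence $w(M'')=1$, and therefore $w(M')=g^{\ast}(1)=1$, so all Stiefel-Whitney classes of $M'$ vanish.

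I expect the last step to be the only real obstacle. The naive approach is to split according to whether the ``all-ones'' element $b_1+\dots+b_d$ lies in $H$ (equivalently, whether some linear relation among the $v_l$ has odd length), which controls whether $v_1^2$ can be reached using only the square generators $v_l^2+v_m^2$. The multiply-by-$v_m$ trick is what removes this dichotomy: it produces a single square $v_m^2$ inside $I_{E_H}$ directly from any nontrivial relation, so the argument goes through uniformly and no case analysis on $H$ is needed.
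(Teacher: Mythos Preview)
Your proof is correct, and the core algebraic idea coincides with the paper's: exploit a nontrivial linear relation among the restricted degree-one classes to force the image of $[x_1^2]$ to vanish. The paper phrases this abstractly as nondegeneracy of the pairing ${\cal C}_1\times{\cal C}_1\to{\cal C}_2$ (for every nonzero $y\in{\cal C}_1$ there is $z$ with $yz=[x_1^2]$, so any $y\in\ker j^{\ast}$ kills the generator of ${\cal C}'_2$), whereas your ``multiply by $v_m$'' step is precisely the explicit witness for that nondegeneracy: with $y=\sum_{l\in T}x_l$ one may take $z=x_m$ for $m\in T$. The only structural difference is that you interpose the intermediate holonomy cover $M''=T^n/H$ and then pull back along $M'\to M''$, while the paper works directly with $M'$ via the induced surjection $j^{\ast}\colon{\cal C}\to{\cal C}'$ of characteristic algebras; the paper's route also yields the slightly stronger statement ${\cal C}'_l=0$ for all $l\geq 2$, but for the Stiefel--Whitney conclusion the two arguments are equivalent.
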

\begin{proof} Let ${\cal C} = \F_2[x_1,\dots ,x_d]/{J}$ be the characteristic algebra of $M$ (equivalently, of $E$)
and ${\cal C}'$ be the characteristic algebra of $M'$ {with characteristic ideal $I_{M'}$.} We claim that ${\cal C}_{l}' = 0$
for $l \geq 2.$ 

  To see this, we note that there is a commutative diagram with exact rows:

\[
\begin{tikzcd}
\pi_1(T^n)\cap \Gamma' \arrow{r}{\iota'} \arrow[hookrightarrow]{d}{} & \Gamma' \arrow{d}{i} \arrow{r}{\pi'} & \Z_2^k \arrow[hookrightarrow]{d}{j}\\
\pi_1(T^n) \arrow{r}{\iota} & \Gamma \arrow{r}{\pi}  & \Z_2^d
\end{tikzcd}
\]

\noindent Combining this with the equation (3.6), yields the commutative diagram:

\[
\begin{tikzcd}
H^1(\Gamma;\F_2) \arrow{d}{i^*} \arrow{r}{d_2} & H^2(\Z_2^{d};\F_2) \arrow[rightarrow]{d}{j^*}\\
H^1(\Gamma';\F_2) \arrow{r}{d'_2}  & H^2(\Z_2^{k};\F_2)
\end{tikzcd}
\]

\noindent This shows that $$j^*(J)=j^*(\langle Im(d_2)\rangle)\subseteq \langle Im(d'_2)\rangle =I_{M'}\subseteq H^*(\Z_2^{k};\F_2).$$
\noindent  Therefore, we get an induced epimorphism of algebras 
$j^{\ast}:{\cal C}\to {\cal C}'$. 

Recall that by Proposition \ref{total}, ${\cal C}_2=\{0, [x_1^2]\}$.  For any $y\in {\cal C}_1\smallsetminus\{0\}$ there
is $z\in {\cal C}_1$ such that $yz=[x_1^2]$.
Suppose otherwise and let $y = [a],$ $a\in \F_2[x_1,\dots,x_d]_1.$ If $y{\cal C}_1 = \{0\}$, then
for any $1\leq k\leq d,$  $ax_k\in {J}_2 =Ker(\varphi)$. This is impossible since $\varphi$ corresponds to
a non-degenerated symmetric two linear map.

Since $\text{dim}\hskip 1mm{\cal C}_1 > \text{dim}\hskip 1mm{\cal C}_{1}'$, there exists $y\in {\cal C}_1$
such that $j^{\ast}(y) = 0.$ We can find
$z\in {\cal C}_1$ so that $yz=[x_1^2]\in {\cal C}_2$.   Because $j^{\ast}$ is an epimorphism and ${\cal C}_2$ is one-dimensional,  $j^{\ast}(yz)$ 
generates ${\cal C}_{2}'.$ But  $j^{\ast}(yz) = j^{\ast}(y)j^{\ast}(z) = 0$ and therefore, ${\cal C}_{2}' = 0.$

Finally, since ${\cal C}_l = 0$ for $l > 2$ and $i^{\ast}$ is surjection, we obtain
the triviality of ${\cal C}_{l}'$ for $l > 2.$ This proves our claim and together with Proposition \ref{total} finishes the proof.
\end{proof}

We are now ready to prove our main result.

\begin{thm} Suppose $M$ is the flat manifold defined by the matrix $F$. Then, $M$ is orientable with holonomy group $\Z_2^d$,  $w_2(M)\ne 0$ and every finite cover with the holonomy group that is a proper subgroup of $\Z_2^d$ has all vanishing Stiefel-Whitney classes.
\end{thm}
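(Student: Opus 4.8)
The plan is to read off the theorem from Propositions~\ref{total} and~\ref{second_prop}, handling the two cases in the definition of $F$ separately; all the substance is already in place, and the only real work is to reconcile the case $d\equiv 3\pmod 4$, where $F\ne E$, with those propositions.

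First I would dispatch the case $d\not\equiv 3\pmod 4$. Here $F=E$, so $M$ is exactly the manifold treated in Propositions~\ref{total} and~\ref{second_prop}. Orientability, $w_2(M)\ne 0$, and $w_i(M)=0$ for $i>2$ are immediate from Proposition~\ref{total}; the holonomy group is $\Z_2^d$ since $F=E$ is effective (Lemma~\ref{char}(ii)); and the assertion about proper covers is Proposition~\ref{second_prop}, because every proper subgroup of $\Z_2^d$ is of the form $\Z_2^k$ with $k<d$.

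Next I would treat $d\equiv 3\pmod 4$, where $E=A$ and $F=[E,C,C,C,C]$. I would first observe that the characteristic algebra is unchanged: by Lemma~\ref{sw_pr}(ii) together with $I_C=0$ and $I_E=J$ (Lemma~\ref{J}) we get $I_F=J$, so $\mathcal C_F=\mathcal C=\F_2[x_1,\dots,x_d]/J$. Then I would compute the total class using Lemma~\ref{sw_pr}(i): $w(F)=w(E)\,w(C)^4$, and since $w(C)=[1+x_1]$ we have $w(C)^4=[(1+x_1)^4]=[1+x_1^4]$; as every homogeneous polynomial of degree $>2$ lies in $J$ (proof of Proposition~\ref{total}), $x_1^4=0$ in $\mathcal C$, hence $w(C)^4=[1]$ and $w(F)=w(E)=[1+x_1^2]$. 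Thus $w(M)=[1+x_1^2]$, so $M$ is orientable, $w_2(M)=[x_1^2]\ne 0$ (since $x_1^2\notin J_2=\ker\varphi$), and $w_i(M)=0$ for $i>2$; and the holonomy group is $\Z_2^d$ because $F$ is effective.

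Finally, for the cover statement when $d\equiv 3\pmod 4$ I would point out that the proof of Proposition~\ref{second_prop} never uses the matrix $E$ itself, only the facts that the characteristic algebra of the base is $\mathcal C=\F_2[x_1,\dots,x_d]/J$, that $\mathcal C_2=\{0,[x_1^2]\}$ is one-dimensional with $\mathcal C_l=0$ for $l>2$, the non-degeneracy supplied by $\varphi$, and the commutative ladder of transgressions induced by $\Gamma'\hookrightarrow\Gamma$. Since $\mathcal C_F=\mathcal C$, every one of these persists, so the same argument yields $\mathcal C'_l=0$ for $l\ge 2$ for any cover $M'$ with holonomy $\Z_2^k$, $k<d$; combined with the orientability of $M'$ (a cover of an orientable manifold), this gives $w(M')=1$. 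I expect the main obstacle to be exactly this transfer: verifying that no step of Proposition~\ref{second_prop} secretly depends on $E$ rather than on $\mathcal C$, and checking that appending the four columns $C$ alters neither the characteristic ideal nor $w_2$ — both of which come down to the vanishing of degrees $>2$ in $\mathcal C$.
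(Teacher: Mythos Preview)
Your proposal is correct and follows the paper's own approach: reduce to $I_F=J$ and $w(F)=w(E)=[1+x_1^2]$ via Lemmas~\ref{sw_pr} and~\ref{J}, then invoke the argument of Proposition~\ref{second_prop} with $F$ in place of $E$. Your explicit verification that $w(C)^4=[1]$ in $\mathcal C$ (using $(1+x_1)^4=1+x_1^4$ and vanishing of degrees ${>}2$) and your remark on $w_1(M')$ via orientability of covers just spell out what the paper leaves implicit.
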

\begin{proof} Since the matrix $F$ is effective, by Lemma \ref{char}, we know that the holonomy group is $\Z_2^d$. By Lemmas \ref{sw_pr} and \ref{J}, it follows that $I_F=J$, $\mathcal C_F= \mathcal C_E$, and $w(F)=w(E)=[1+x_1^2]$.  Hence, $M$ is orientable, but non-spin. The last claim follows from applying the proof of Proposition \ref{second_prop} to the manifold $M$ defined by the matrix $F$ in place of $E$.
\end{proof}

\bigskip

\bigskip

\noindent R.~Lutowski, J.~Popko, \& A.~Szczepa\'nski\\
{Institute of Mathematics, University of Gda\'{n}sk, Gda\'{n}sk, 80-952, Poland}\\
{\it E-mail-address}: rafal.lutowski@mat.ug.edu.pl, jpopko@mat.ug.edu.pl,\\
 aszczepa@mat.ug.edu.pl\\

\noindent{N.~Petrosyan}\\
{Mathematical Sciences, University of Southampton, SO17 1BJ, UK}\\
{\it E-mail-address}: n.petrosyan@soton.ac.uk

\end{document}